\newtheorem{thm}{Theorem}[section]
\newtheorem{prop}[thm]{Proposition}
\newtheorem{lemma}[thm]{Lemma}
\newtheorem{defn}[thm]{Definition}
\begin{document}
\allowdisplaybreaks[3] 

\title{Perturbations of von Neumann subalgebras with finite index}

\author[S. Ino]{SHOJI INO}

\address{Department of Mathematical Sciences, Kyushu University, Motooka, Fukuoka, 819-0395, Japan.}

\email{s-ino@math.kyushu-u.ac.jp}

\keywords{von Neumann algebras; Perturbations.} 
\subjclass[2010]{Primary~46L10,  Secondary~46L37}

\begin{abstract}
In this paper, we study uniform perturbations of von Neumann subalgebras of a von Neumann algebra. 
Let $N$ and $M$ be von Neumann subalgebras of a von Neumann algebra with finite probabilistic index in the sense of Pimsner-Popa.
If $N$ and $M$ are sufficiently close, 
then $N$ and $M$ are unitarily equivalent.
The implementing unitary can be chosen as being close to the identity.
\end{abstract}

\maketitle

\section{introduction}

In 1972, the uniform perturbation theory of operator algebras was started by Kadison and Kastler \cite{KK}.
They defined a metric on the set of operator algebras on a fixed Hilbert space by the Hausdorff distance between their unit balls.
We get basic examples of close operator algebras by small unitary perturbations.
Namely, given an operator algebra $N \subset \mathbb{B}(H)$ and a unitary operator $u\in \mathbb{B}(H)$, 
if $u$ is close to the identity operator, then $u N u^*$ is close to $N$.
Conversely, Kadison and Kastler suggested that suitably close operator algebras must be unitarily equivalent.
This conjecture was solved positively for injective von Neumann algebras in \cite{Chris2,RT,Johnson1} with earlier special cases \cite{Chris1,P1}.
Recently, \cite{CCSSWW} gave classes of non-injective von Neumann algebras for which this conjecture valid.
In \cite{Chris3}, for von Neumann subalgebras in a {\it finite} von Neumann algebra, Kadison-Kastler conjecture was solved positively.
However, for general von Neumann algebras, this conjecture is still open.

Examples of non-separable C$^*$-algebras which are arbitrary close but non-isomorphic were found in \cite{CC}. 
However, for general separable C$^*$-algebras, Kadison-Kastler conjecture is still open.
In \cite{CSSWW}, the conjecture was solved positively for separable nuclear C$^*$-algebras.
Earlier special cases of \cite{CSSWW} was studied in \cite{Chris5,PR1,PR2,Khoshkam1}.
The author and Watatani showed that for an inclusion of simple C$^*$-algebras with finite index, sufficiently close intermediate C$^*$-subalgebras  are unitarily equivalent in \cite{IW}.
Although our constants depend on inclusions, 
Dickson got universal constants independent of inclusions in \cite{Dickson}.

In this paper, we study uniform perturbations of von Neumann subalgebras of a von Neumann algebra with finite index.
Let  $N$ and $M$ be von Neumann subalgebras of a von Neumann algebra $L$
 with conditional expectations $E_N:L\to N$ and $E_M:L\to M$ of finite probabilistic index in the sense of Pimsner-Popa \cite{PP}. 
If the distance between $N$ and $M$ is sufficiently small, then $N$ and $M$ are unitarily equivalent.
Moreover, the implementing unitary can be chosen as being close to the identity.
In general, there exist examples of arbitrarily close unitarily conjugate C$^*$-algebras where the implementing unitaries could not chosen to be close to the identity in \cite{Johnson2}.
Compared with the author and Watatani's C$^*$-algebraic case \cite{IW},
we do not assume that $N$ and $M$ have a common subalgebra with finite index.

\section{distance and the relative Dixmier property}

In this paper, all von Neumann algebras are countably decomposable, that is, they have faithful normal  states.

We recall the distance defined by Kadison and Kastler in \cite{KK} and near inclusions defined by Christensen in \cite{Chris5}.
For a von Neumann algebra $N$,
we denote by $N_1$ the unit ball of $N$.

\begin{defn}\label{metric}\upshape
Let $N$ and $M$ be von Neumann algebras in $\mathbb{B}(H)$.
Then, the distance between $N$ and $M$ is defined by
\[
d(N,M):= \max \left\{ \sup_{n\in N_1} \inf_{m\in M_1} \| n-m \| \ , \ \sup_{m\in M_1} \inf_{n\in N_1} \| m-n\| \right\}.
\]
Let $\gamma>0$.
We say that $N$ is $\gamma$ contained in $M$ and write $N \subseteq_{\gamma} M$ if
for any $n\in N_1$, there exists $m\in M$ such that $\| n - m\| \le \gamma$.
\end{defn}

If $d(N,M)<\gamma$, then for any $x$ in either $N_1$ or $M_1$, there exists $y$ in the other unit ball such that $\| x-y\| \le \gamma$.

The following well-known fact is needed to show that maps are onto in Proposition \ref{isomorphism}.

\begin{lemma}\label{lem:<1}
Let $N$ and $M$ be von Neumann algebras in $\mathbb{B}(H)$. 
If $N\subset M$ and $d(N ,M )<1$, then $N=M$. 
\end{lemma}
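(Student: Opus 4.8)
The plan is to argue by contradiction: assuming $N\subsetneq M$, I will exhibit a single element of $M_1$ lying at distance at least $1$ from the whole unit ball $N_1$, which forces $d(N,M)\ge 1$. The key realization is that this is a soft statement about a proper norm-closed subspace of a Banach space, so one should resist doing anything operator-algebraic here; in particular, trying to approximate a projection of $M$ by a projection of $N$ only works when the distance is below $1/2$ (a self-adjoint element within $\gamma$ of a projection has a spectral gap only for $\gamma<1/2$), and would not recover the sharp constant $1$.

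Concretely, I would first note that $N\subseteq M$ gives $N_1\subseteq M_1$, so the first supremum in Definition~\ref{metric} is $0$ and $d(N,M)=\sup_{m\in M_1}\inf_{n\in N_1}\|m-n\|$. Since $N$ is a proper norm-closed subspace of $M$, the Hahn--Banach theorem yields $\psi\in M^{*}$ with $\|\psi\|=1$ and $\psi|_{N}=0$ (one could arrange $\psi$ to be normal, or via the Russo--Dye theorem replace the element chosen below by a unitary of $M$, but neither refinement is needed). Given $\varepsilon>0$, the definition of the dual norm provides $m\in M_1$ with $|\psi(m)|>1-\varepsilon$. Then for every $n\in N_1$ we have $\|m-n\|\ge|\psi(m-n)|=|\psi(m)|>1-\varepsilon$, because $\psi$ annihilates all of $N$; hence $\inf_{n\in N_1}\|m-n\|\ge 1-\varepsilon$ and so $d(N,M)\ge 1-\varepsilon$. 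Letting $\varepsilon\to 0$ gives $d(N,M)\ge 1$, contradicting $d(N,M)<1$, and therefore $N=M$.

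I do not expect a genuine obstacle: the argument is just Hahn--Banach together with the definition of the functional norm, and it in fact shows that $d(N,M)=1$ for every proper von Neumann subalgebra, so the constant in the statement is optimal. The only thing to be careful about is keeping the argument at this abstract level rather than attempting a spectral-perturbation proof.
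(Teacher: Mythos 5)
Your argument is correct and sharp. The paper offers no proof of this lemma at all --- it is invoked as a ``well-known fact'' --- so there is nothing internal to compare against; your Hahn--Banach argument (equivalently, the observation that the quotient map $M\to M/N$ carries the open unit ball of $M$ onto that of the nonzero Banach space $M/N$, so some $m\in M_1$ has $\operatorname{dist}(m,N)>1-\varepsilon$, hence $\operatorname{dist}(m,N_1)>1-\varepsilon$) is the standard proof of this fact and correctly yields the optimal constant $1$, where a spectral-projection argument would only reach $1/2$. The only superfluous remarks are the asides about arranging $\psi$ normal or invoking Russo--Dye, which, as you note, are not needed.
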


The next lemma records some standard estimates.

\begin{lemma}\label{lem:estimates}
Let $A$ be a unital $\mathrm{C}^*$-algebra.
\begin{enumerate}
\item 
Let $x \in A$ satisfy that $\|x-I\|<1$ and 
$u \in A$ be the unitary in the polar decomposition $x = u|x|$. 
Then,
\[ 
\|u-I\| \le \sqrt{2}\|x-I\|. 
\]
\item
Let $p$ and $q$ be projections in $A$ with $\|p-q\|<1$. 
Then, there exists a unitary $w\in A$ such that 
\[
w p w^*=q  \ \ \text{ and } \ \  
\|w-I\|\le \sqrt{2}\|p-q\|.
\]
\end{enumerate}
\end{lemma}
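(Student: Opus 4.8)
The plan is to prove the two estimates in turn, obtaining (2) from (1).

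For (1): since $\|x - I\| < 1$, $x$ is invertible, so in the polar decomposition $x = u|x|$ the factor $u$ is a genuine unitary and $|x| = (x^*x)^{1/2}$ is positive and invertible. Write $t = \|x - I\|$. Then $\|x\| \le 1 + t$ and $\|x^{-1}\| \le (1-t)^{-1}$, hence $\sigma(x^*x) \subseteq [(1-t)^2,(1+t)^2]$ and therefore
\[
\|\,|x| - I\,\| \le t .
\]
The idea is to use the polar-decomposition identities $u = x|x|^{-1}$ and $u^*x = |x| = x^*u$ — in particular $I - u = (|x| - x)|x|^{-1}$ and $u - x = u(I - |x|)$ — to pass to the positive element $(I-u)^*(I-u) = 2I - u - u^*$, whose norm equals $\|u - I\|^2$. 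A careful but elementary estimate of $2I - u - u^*$, using the bound $\|\,|x| - I\,\| \le t$ together with these identities to separate the lowest-order self-adjoint contribution from the remaining terms, should give $\|2I - u - u^*\| \le 2t^2$, i.e. $\|u - I\| \le \sqrt2\,t$. I expect this to be the main obstacle: the soft bound $\|u - I\| \le \|u - x\| + \|x - I\| = \|\,|x| - I\,\| + \|x - I\| \le 2t$ is immediate, but extracting the sharper constant $\sqrt2$ requires genuine computation rather than just the triangle inequality.

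For (2): given projections $p,q$ with $\|p - q\| < 1$, form
\[
v := qp + (I - q)(I - p).
\]
Then $qv = vp$, since both sides equal $qp$ (because $(I-q)(I-p)p = 0$ and $q(I-q)(I-p) = 0$). A one-line computation gives $v - I = -(I - 2q)(p - q)$, and as $I - 2q$ is a self-adjoint unitary this yields $\|v - I\| = \|p - q\| < 1$; in particular $v$ is invertible. Let $v = w|v|$ be its polar decomposition, with $w$ unitary. By part (1), $\|w - I\| \le \sqrt2\,\|v - I\| = \sqrt2\,\|p - q\|$.

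It remains to check $wpw^* = q$. Expanding the product shows $v^*v = pqp + (I-p)(I-q)(I-p)$, and this commutes with $p$ (both $p\,v^*v$ and $v^*v\,p$ equal $pqp$); hence $|v| = (v^*v)^{1/2}$ commutes with $p$ as well. Combining $qv = vp$ with $|v|p = p|v|$ gives $qw|v| = w|v|p = wp|v|$, and cancelling the invertible factor $|v|$ on the right yields $qw = wp$, that is $wpw^* = q$. Thus (2) reduces to (1), and the heart of the matter is the estimate, with its constant $\sqrt2$, in part (1).
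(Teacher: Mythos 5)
The paper itself offers no proof of this lemma (it is recorded as a standard estimate), so your proposal can only be judged on its own terms; and on those terms part (1) -- which you correctly identify as the heart of the matter -- has a genuine gap. You reduce everything to showing $\|2I-u-u^*\|\le 2t^2$ with $t=\|x-I\|$, but you never prove this: you only say that a ``careful but elementary estimate'' using the identities $I-u=(|x|-x)|x|^{-1}$ and $u-x=u(I-|x|)$ ``should'' give it. It will not. Every term-by-term bound of that kind is linear in $t$, not quadratic: for instance $2I-u-u^*=2(I-|x|)+(|x|-u)+(|x|-u)^*$ together with $\||x|-I\|\le t$ and $\||x|-u\|\le t$ gives only $\|2I-u-u^*\|\le 4t$, i.e.\ $\|u-I\|\le 2\sqrt{t}$, and the triangle inequality route gives $\|u-I\|\le 2t$. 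The constant $\sqrt2$ cannot be extracted by ``separating the lowest-order contribution''; it needs a spectral argument. One correct route: writing $h=|x|$, the inequality $(x-I)^*(x-I)\le t^2I$ rearranges to $uh+hu^*\ge h^2+(1-t^2)I\ge 2\sqrt{1-t^2}\,h$ (since $(h-\sqrt{1-t^2}\,I)^2\ge0$); conjugating by $h^{-1/2}$ shows that the numerical range of $h^{-1/2}uh^{1/2}$ lies in $\{z:\mathrm{Re}\,z\ge\sqrt{1-t^2}\}$, hence so does its spectrum, which by similarity equals $\sigma(u)$; since $u$ is unitary, hence normal, the spectral theorem then gives $\|u-I\|^2=\max_{\lambda\in\sigma(u)}|\lambda-1|^2=\max_{\lambda\in\sigma(u)}(2-2\,\mathrm{Re}\,\lambda)\le 2-2\sqrt{1-t^2}\le 2t^2$. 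Without some argument of this type (numerical range/sectoriality, or an equivalent), your part (1) is an assertion, not a proof.

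By contrast, your part (2) is complete and correct as written: the element $v=qp+(I-q)(I-p)$ satisfies $qv=vp$ and $v-I=-(I-2q)(p-q)$, so $\|v-I\|=\|p-q\|<1$; $v^*v=pqp+(I-p)(I-q)(I-p)$ commutes with $p$, so $|v|$ does too, and cancelling $|v|$ in $qw|v|=wp|v|$ gives $wpw^*=q$ with $\|w-I\|\le\sqrt2\|p-q\|$ by part (1). So the only thing to repair is the key inequality in part (1).
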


Jones introduced index for inclusions of type II$_1$ factors in \cite{Jones}.
For arbitrary factors, Kosaki extended Jones' notion of index in \cite{Kosaki}.
The following definition was introduced by Pimsner and Popa in \cite{PP}.

\begin{defn}\upshape
Let $N\subset M$ be an inclusion of von Neumann algebras and $E:M\to N$ be a conditional expectation.
Then, we call $E$ is {\it of finite probabilistic index} if there exists $c\ge 0$ such that
\[
 E(x^*x) \ge c x^*x \ \ \ \text{for} \ x\in M.
\]
When $E$ is of finite probabilistic index, we define the probabilistic index of $E$ by $( \sup \{ c\ge 0 \, | \,  E(x^*x) \ge  c x^* x  \ \, \text{for} \ x \in M \} )^{-1}$.
\end{defn}

We recall the basic construction (see \cite{Popa1}).
Let $N\subset M$ be an inclusion of von Neumann algebras with  a faithful normal conditional expectation $E_N:M\to N$
and $\psi$ be a faithful normal sate on $N$. 
Put $\phi:=\psi\circ E_N$.
Then, $\phi$ is a faithful normal state on $M$.
Let $(H, \pi, \xi)$ be the GNS triplet associated with $\phi$.
Then, we get the Jones projection $e_N \in \mathbb{B}(H)$ satisfying
\[
{\rm Im}(e_N)=[N\xi]   \ \ \ \text{and} \ \ \ e_N (x \xi)= E_N(x) \xi \ \ \text{for} \ x\in M. 
\]
The basic construction $\langle M, e_N \rangle$ is the von Neumann algebra in $\mathbb{B}(H)$ generated by $M$ and $e_N$.
If $E_N$ is of finite probabilistic index, then there exists a conditional expectation $E_M: \langle M,e_N \rangle \to M$ of finite probabilistic index by \cite{Popa1}.

\

Let $N\subset M$ be an inclusion of von Neumann algebras.
For any $x\in M$, we will denote by $C_N(x)$ the norm closure of the convex hull of $\{ u x u^* \, | \, u \ \text{is unitary element in} \ N \}$.
We recall the relative Dixmier property for inclusions of von Neumann algebras after Popa \cite{Popa2}.

\begin{defn}\upshape
Let $N\subset M$ be an inclusion of von Neumann algebras.
Then, we say that $N\subset M$ has {\it the relative Dixmier property} if for any $x\in M$,
$C_N(x) \cap N'\cap M \neq \emptyset$.
\end{defn}

In \cite{Popa2}, Popa showed the following theorem.

\begin{thm}[Popa \cite{Popa2}]\label{Popa}
Let $N\subset M$ be an inclusion of von Neumann algebras with a conditional expectation $E:M\to N$ of finite probabilistic index.
Then, $N\subset M$ has the relative Dixmier property.
\end{thm}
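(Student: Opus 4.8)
The plan is to reduce the assertion to the following \emph{local averaging property} of a self-adjoint $w\in M$, which I denote $L(w)$: for every $\varepsilon>0$ there is a finite convex combination $\Phi=\sum_i t_i\,\mathrm{Ad}(u_i)$ of inner automorphisms implemented by unitaries $u_i\in N$ with $\mathrm{dist}(\Phi(w),\,N'\cap M)<\varepsilon$. Two soft reductions show that $L$, holding for all self-adjoint elements, already yields the theorem. First, $L$ then holds for an arbitrary $x=a+ib$ with $a,b\in M$ self-adjoint: apply $L$ to $a$ to produce $\Phi_1$ with $\|\Phi_1(a)-z_1\|<\varepsilon/2$ for some $z_1\in N'\cap M$, then apply $L$ to the self-adjoint element $\Phi_1(b)$ to produce $\Phi_2$ with $\|\Phi_2\Phi_1(b)-z_2\|<\varepsilon/2$ for some $z_2\in N'\cap M$; since each $\mathrm{Ad}(u)$ with $u\in\mathcal U(N)$ is isometric and fixes $N'\cap M$ pointwise, $\Phi_2\Phi_1(a)$ remains within $\varepsilon/2$ of $z_1$, so $\Phi_2\Phi_1(x)$ lies within $\varepsilon$ of $z_1+iz_2\in N'\cap M$, and $\Phi_2\Phi_1$ is again a finite convex combination of inner automorphisms from $\mathcal U(N)$. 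Second, $L$ for all $x$ produces a point of $C_N(x)\cap(N'\cap M)$ by iteration: since $C_N(w)$ is convex, norm closed and $\mathrm{Ad}(\mathcal U(N))$-invariant, one has $C_N(w')\subseteq C_N(x)$ whenever $w'\in C_N(x)$, so one may take $y_0=x$ and then $y_{k+1}\in C_N(y_k)\subseteq C_N(x)$ with $\mathrm{dist}(y_{k+1},N'\cap M)<2^{-k}$; if $z_k\in N'\cap M$ nearly attains $\mathrm{dist}(y_k,N'\cap M)$, then $\|y_{k+1}-z_k\|\le\|y_k-z_k\|$ because the averaging defining $y_{k+1}$ fixes $z_k$, whence $\|y_{k+1}-y_k\|\le 2\|y_k-z_k\|$ is summable and the norm limit $y_\infty$ lies in $C_N(x)$ and in $N'\cap M$, both norm closed.

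Everything thus comes down to $L(w)$ for self-adjoint $w$, and this is the crux of the matter, the step where the finite-index hypothesis is essential and which I expect to be the main obstacle. A natural route is to use a Pimsner--Popa basis $m_1,\dots,m_n\in M$, that is, $\sum_j m_j E(m_j^*\,\cdot\,)=\mathrm{id}_M$, which one may take finite since $E$ has finite index; this reconstruction expresses $x$ through the \emph{finitely many} ``matrix coefficients'' $E(m_j^*\,x\,m_l)\in N$, while the Pimsner--Popa inequality $E(y^*y)\ge c\,y^*y$ with $c=\mathrm{Ind}(E)^{-1}>0$ controls such a basis in terms of the index, so that the norm geometry of $M$ transverse to $N'\cap M$ is governed by that of finitely many elements of $N$. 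One then applies the ordinary Dixmier property \emph{inside} $N$, simultaneously to those finitely many elements --- handling them one at a time, each further $N$-averaging leaving essentially fixed the central elements already reached --- to obtain unitaries $u_1,\dots,u_k\in N$ for which $\tfrac1k\sum_i\mathrm{Ad}(u_i)$ carries $x$ to within $\varepsilon$ of $N'\cap M$; the index constant $c$ is precisely what keeps the error in this step controllable, and without it the averaging need not reach $N'\cap M$ in norm. Making this estimate rigorous is the technical heart of \cite{Popa2}, which I would cite rather than reproduce.
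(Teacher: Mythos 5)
This theorem is not proved in the paper at all: it is stated as Popa's result and the ``proof'' is the citation to \cite{Popa2}. So the comparison is really between your argument and Popa's original one. Your two framing reductions are correct and standard: passing from self-adjoint elements to arbitrary $x=a+ib$ by composing averaging operators (using that each $\mathrm{Ad}(u)$, $u$ a unitary of $N$, is an isometry fixing $N'\cap M$ pointwise, so a second averaging does not spoil the first), and upgrading the approximate statement to an exact point of $C_N(x)\cap N'\cap M$ by iteration, using $C_N(y)\subseteq C_N(x)$ for $y\in C_N(x)$ and the summable-increment estimate $\|y_{k+1}-y_k\|\le 2\|y_k-z_k\|$. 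Both steps are sound.

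The genuine gap is that the entire content of the theorem lives in the local averaging property $L(w)$, for which you give only a heuristic before deferring to \cite{Popa2}, and the heuristic as written would not go through. Expanding $x=\sum_j m_jE(m_j^*x)$ with a Pimsner--Popa basis is not equivariant under $\mathrm{Ad}(u)$ for $u$ a unitary of $N$: the basis elements $m_j$ are moved by the conjugation, so averaging $x$ over unitaries of $N$ does not correspond to averaging the matrix coefficients $E(m_j^*xm_l)$. Moreover, the ordinary Dixmier property applied inside $N$ lands in $Z(N)=N'\cap N$, whereas the target is the generally much larger algebra $N'\cap M$, and there is no direct passage from central elements of $N$ attached to the coefficients back to an element of $N'\cap M$ approximating $C_N(x)$; this mismatch is precisely why the relative Dixmier property is hard and occupies a full paper of Popa's, whose actual argument proceeds quite differently (through local versions of the statement and patching over the center). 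Citing \cite{Popa2} for this step is a legitimate choice --- it is exactly what the paper itself does, with none of your surrounding reductions --- but your sketch of the crucial step should not be mistaken for an outline that could be completed as written.
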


We shall show relations between the relative Dixmier property and the distance.

Let $N\subset M$ be an inclusion of von Neumann algebras.
For any $x\in M$, the map $ad(x) : N\to M$ is defined by $(ad(x))(y)=y x- x y$.

The proof of the next proposition follows from  \cite[Proposition 2.5]{CSSW}.

\begin{prop}\label{distance of commutant}
Let $N$ and $M$ be von Neumann subalgebras of a von Neumann algebra $L$  with $N\subseteq_{\gamma}M$.
If $N\subset L$ has the relative Dixmier property, then
\[
M'\cap L\subseteq_{2 \gamma} N'\cap L.
\]
\end{prop}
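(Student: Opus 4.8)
The plan is to combine the relative Dixmier property of $N\subset L$ with a short commutator estimate built from the near-containment $N\subseteq_\gamma M$.

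First I would fix a contraction $x\in (M'\cap L)_1$ and a unitary $u\in N$, and estimate $\|uxu^*-x\|$. Since $u$ lies in the unit ball of $N$ and $N\subseteq_\gamma M$, there is $m\in M$ with $\|u-m\|\le\gamma$. Because $x$ commutes with every element of $M$, in particular $mx=xm$, so
\[
uxu^*-x=(ux-xu)u^*,\qquad ux-xu=(u-m)x+x(m-u)=(u-m)x-x(u-m),
\]
and therefore
\[
\|uxu^*-x\|=\|ux-xu\|\le 2\,\|u-m\|\,\|x\|\le 2\gamma.
\]
This is the heart of the argument: the hypothesis $x\in M'\cap L$ kills the middle term and turns the closeness of $u$ to $M$ into a bound independent of $u$.

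Next, since the bound $\|uxu^*-x\|\le 2\gamma$ is uniform over all unitaries $u\in N$, it passes to convex combinations: for $y_0=\sum_i\lambda_i u_i x u_i^*$ with $u_i$ unitaries in $N$, $\lambda_i\ge 0$, $\sum_i\lambda_i=1$, the triangle inequality gives $\|y_0-x\|\le 2\gamma$. Taking norm limits, $\|y-x\|\le 2\gamma$ for every $y\in C_N(x)$; note also that $C_N(x)\subseteq L$ since $x\in L$ and the unitaries lie in $N\subseteq L$.

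Finally, by Theorem~\ref{Popa} the inclusion $N\subset L$ has the relative Dixmier property (this is exactly the hypothesis), so $C_N(x)\cap N'\cap L$ is nonempty; choosing any $y$ in this set yields $y\in N'\cap L$ with $\|x-y\|\le 2\gamma$. As $x\in(M'\cap L)_1$ was arbitrary, this shows $M'\cap L\subseteq_{2\gamma}N'\cap L$. I do not anticipate a genuine obstacle here — the only slightly delicate point is verifying that the uniform commutator bound survives both the convex-hull and the norm-closure steps, which is routine.
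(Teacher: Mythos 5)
Your proof is correct and follows essentially the same route as the paper: a uniform commutator bound $\|uxu^*-x\|\le 2\gamma$ obtained by replacing the unitary $u\in N$ with a nearby element of $M$ that commutes with $x$, combined with the relative Dixmier property to produce $y\in C_N(x)\cap N'\cap L$ within $2\gamma$ of $x$. The only cosmetic difference is that the paper routes the estimate through the norm of the derivation $ad(x)$ on the unit ball of $N$, whereas you carry the convex-hull and norm-closure steps explicitly.
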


\begin{proof}
For any $x\in M'\cap L_1$, there exists $y\in C_N(x)\cap N'\cap M$.
Since for any unitary $u\in N$,
\[
\|u x u^*-x\| =\|u x-x u\| =\|(ad(x))(u)\| \le \| ad(x)\|,
\]
we have $\|y-x\|\le \|ad(x)\|$.
On the other hand, for any $n\in N_1$, there exists $m\in M$ such that $\| n- m\|\le \gamma$.
Thus,
\begin{align*}
\| (ad(x))(n)\|
&= \|n x - x n\| = \| n x - m x + x m - x n \| \\
&\le \| n - m \| + \| m - n \| \le 2\gamma.
\end{align*}
Namely, $\| x -y \| \le \|ad(x) \| \le 2\gamma$.
\end{proof}

\section{perturbations}

In the following proposition, we construct a surjective $*$-isomorphism between von Neumann subalgebras of a von Neumann algebra with finite probabilistic index.
The argument is originated in early work of Christensen \cite{Chris2,Chris3}.

\begin{prop}\label{isomorphism}
Let  $N$ and $M$ be von Neumann subalgebras of a von Neumann algebra $L$ 
with conditional expectations $E_N:L\to N$, $E_M:L\to M$ of finite probabilistic index.
If $d(N,M)<1/15$, then there exists a normal surjective $*$-isomorphism $\Phi:N\to M$ such that $\| \Phi-id_N\| < 14 d(N,M)$.
\end{prop}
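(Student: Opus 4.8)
The plan is to build $\Phi$ by the standard averaging-over-a-common-subspace trick of Christensen, adapted to the setting where we only have finite-index conditional expectations rather than a common finite trace. First I would fix a faithful normal state $\psi$ on $L$ and form the GNS representation, getting the Jones-type projections $e_N$ and $e_M$ onto $[N\xi]$ and $[M\xi]$. The near containment $d(N,M)<1/15$ should force these two projections to be close: since $E_N$ and $E_M$ are contractive and their ranges are close, one estimates $\|e_N-e_M\|$ in terms of $d(N,M)$. By \lemref{lem:estimates}(2) there is a unitary $w$ in the von Neumann algebra generated by $e_N$ and $e_M$ (hence commuting with an appropriate algebra) with $w e_N w^* = e_M$ and $\|w-I\|$ controlled by $\|e_N-e_M\|$. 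One then defines a candidate map by compressing: for $n\in N$, consider the element of $\mathbb{B}(H)$ acting on $M\xi$ as $m\mapsto w n w^* m$-type expressions, and use the conditional expectation $E_M$ (or the dual expectation from the basic construction) to land the image back inside $M$. Concretely, the composition $n\mapsto E_M(w n w^*)$, or a slight variant, should be the desired $*$-homomorphism once the cross terms are shown to vanish because $w$ intertwines the Jones projections.

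The key steps in order: (1) estimate $\|e_N-e_M\|\le C\, d(N,M)$ for an explicit small constant $C$, using that for $n\in N_1$ there is $m\in M$ with $\|n-m\|\le d(N,M)$ and the standard computation $\|(e_N-e_M)\xi'\|$ on vectors $\xi'$ in the relevant subspaces; (2) produce the intertwining unitary $w$ with $\|w-I\|\le\sqrt{2}\|e_N-e_M\|$ via \lemref{lem:estimates}(2); (3) define $\Phi(n):=E_M(wnw^*)$ and verify it is a normal, unital, completely positive, and in fact multiplicative map $N\to M$ — multiplicativity is where the intertwining relation $we_Nw^*=e_M$ is used, since it guarantees $wnw^*$ already essentially lives in the right "corner" so that $E_M$ restricted there is a homomorphism; (4) estimate $\|\Phi(n)-n\|$ by writing $\Phi(n)-n = E_M(wnw^*-n) = E_M(wnw^*-wnw^*) \cdots$ — more carefully, bound $\|wnw^*-n\|\le 2\|w-I\|$ and $\|E_M(wnw^*)-n\|$ using that $n$ is already close to $M$, chaining the constants to reach $14\,d(N,M)$; (5) show $\Phi$ is injective (it is a $*$-homomorphism between von Neumann algebras that is close to the identity, hence isometric, hence injective) and surjective by a symmetric argument producing $\Psi:M\to N$ with $\|\Psi-id_M\|<1$, so that $\Phi\circ\Psi$ and $\Psi\circ\Phi$ are close to the respective identities and \lemref{lem:<1} (applied to $\Phi(N)\subset M$, noting $d(\Phi(N),M)<1$) forces $\Phi(N)=M$.

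The main obstacle I expect is step (3), establishing that $\Phi=E_M(w\cdot w^*)$ is genuinely multiplicative rather than merely completely positive and contractive. The point is that $E_M$ is not a homomorphism on all of $L$; multiplicativity of the composite hinges on showing $wnw^*\in M + (\text{something annihilated appropriately by }E_M)$, or equivalently that $w$ conjugates $N$ into a subalgebra on which $E_M$ acts multiplicatively, which is exactly what the relation $we_Nw^*=e_M$ buys us after passing through the basic construction and using the bimodule property of $e_M$. Making the constant explicit — getting from the $\sqrt 2$ factors and the $e_N$–$e_M$ estimate down to the clean bound $14\,d(N,M)$ under the hypothesis $d(N,M)<1/15$ — will require careful bookkeeping: one must ensure every intermediate quantity (notably $\|e_N-e_M\|$ and $\|w-I\|$) stays below the thresholds where \lemref{lem:estimates} and \lemref{lem:<1} apply, and the numerology $1/15$ is presumably chosen precisely so that these chained inequalities close. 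A secondary subtlety is checking normality of $\Phi$, which follows since $E_M$ is normal and conjugation by a fixed unitary is normal, so this should be routine once the algebraic structure is in place.
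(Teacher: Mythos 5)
Your outline has a genuine gap at its very first step, and it is the step that the paper's entire argument is designed to circumvent. You claim that $d(N,M)$ small forces $\|e_N-e_M\|$ small, where $e_N, e_M$ are the Jones projections onto $[N\xi]$ and $[M\xi]$. This does not follow. For a unit vector $n\xi\in[N\xi]$ one can only estimate $\|(I-e_M)n\xi\|\le\inf_m\|(n-m)\xi\|\le \gamma\|n\|$, and the operator norm $\|n\|$ is not controlled by the GNS norm $\|n\xi\|=\phi(n^*n)^{1/2}$; the ratio $\|n\|/\|n\xi\|$ is unbounded in general. So there is no estimate $\|e_N-e_M\|\le C\,d(N,M)$, and steps (2)--(5) have nothing to stand on. A symptom of the problem is that your argument never uses the finite probabilistic index of $E_N$ and $E_M$ in any essential way; since the general Kadison--Kastler problem is open, a proof that does not exploit this hypothesis cannot be correct.

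What the paper does instead is replace the (unavailable) projection $e_N$ by a projection \emph{manufactured} inside $N'\cap\langle L,e_M\rangle$ using the finite-index hypothesis: $E_N\circ E_L$ has finite probabilistic index, so $N\subset\langle L,e_M\rangle$ has the relative Dixmier property (Theorem \ref{Popa}), whence Proposition \ref{distance of commutant} gives $M'\cap\langle L,e_M\rangle\subseteq_{2\gamma}N'\cap\langle L,e_M\rangle$. Applying this to $e_M\in M'\cap\langle L,e_M\rangle$ produces $t\in N'\cap\langle L,e_M\rangle$ with $\|t-e_M\|\le 2\gamma$, and a spectral projection $p$ of $(t+t^*)/2$ satisfies $p\in N'$ and $\|p-e_M\|\le 4\gamma<1$. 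Only now does Lemma \ref{lem:estimates}(2) enter, giving $w$ with $we_Mw^*=p$ and $\|w-I\|\le 4\sqrt2\gamma$, and the map $\tilde\Phi(x)=e_Mw^*xwe_M=w^*pxpw$ is multiplicative precisely because $p\in N'$. Your concern in step (3) about multiplicativity of $E_M(w\cdot w^*)$ is thus resolved in the paper not by the intertwining relation alone but by the commutation $p\in N'$, which you would also get if you could use $e_N$ --- but you cannot, for the reason above. To repair your proof you would need to import the relative Dixmier/averaging mechanism; without it the argument fails.
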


\begin{proof}
Put $\gamma:= (1.01) d(N,M)$.
Let $\langle L, e_M\rangle$ be the basic construction by using $E_M:L\to M$.
Then, there exists a conditional expectation $E_L: \langle L, e_M \rangle \to L$ of finite probabilistic index.
Since $E_N\circ E_L:\langle L, e_M\rangle \to N$ is of finite probabilistic index, 
$N\subset \langle L, e_M\rangle$ has the relative Dixmier property by Theorem \ref{Popa}.
Therefore, 
\[
M'\cap \langle L, e_M\rangle \subseteq_{2\gamma} N'\cap \langle L, e_M\rangle
\]
by Proposition \ref{distance of commutant}.
Thus, there exists $t\in N'\cap \langle L, e_M\rangle$ such that $\|t- e_M\|\le 2 \gamma <1/2$.
Put $p:= \chi_{[1-2\gamma,\, 1+2\gamma]}((t+t^*)/2)$.
Since we have $\|p-e_M\| \le \| p- t\| +\| t -e_M \|\le 4\gamma <1$, there exists a unitary $w\in \langle L, e_M\rangle$ such that 
\[
we_M w^*=p \ \ \ \text{and} \ \ \   \|w-I\|\le 4\sqrt{2} \gamma
\]
by Lemma \ref{lem:estimates}.
For any $x\in N$, we define $\tilde\Phi(x):=e_M w^* x w e_M= w^* p x p w$.
Then, $\tilde\Phi:N \to e_M \langle L, e_M \rangle e_M$ is a normal $*$-homomorphism, because $p\in N'$.
Now, there exists a surjective $*$-isomorphism $\iota: e_M \langle L, e_M \rangle e_M \to M$.
Hence, we can define a normal $*$-homomorphism $\Phi:=\iota \circ \tilde\Phi:N\to M$.
For any $x\in N_1$, 
\begin{align*}
\| \Phi(x)-E_M(x)\|
&= \|e_M ( \Phi(x) - E_M(x) )e_M\| \\
&= \| e_M w^* x w e_M - e_M x e_M\| \\
&\le 2\| w-I\| \le 8\sqrt{2} \gamma.
\end{align*}
Therefore, by \cite[Lemma 3.2]{IW},
\[
\| \Phi-id_N\| \le \|\Phi- E_M|_N\| +\| E_M|_N -id_N\| \le (8\sqrt{2}+2)\gamma < 14 d(N,M) <1.
\]
This gives that $\Phi$ is a $*$-isomorphism.

Moreover, for any $x\in M_1$,  there exists $y\in N_1$ such that $\|x-y\| \le \gamma$.
Then,
\begin{align*}
\| x- \Phi(y)\|
& \le \|x-y\|+\| y- \Phi(y)\| \\
&\le \gamma +(8\sqrt{2}+2) \gamma < 15 d(N,M) <1.
\end{align*}
Since this gives that $d(M, \Phi(N))<1$, $\Phi(N)=M$ by Lemma \ref{lem:<1}. 
\end{proof}

The following is our main theorem in this paper.
We based on Christensen's work \cite[Proposition 4.2]{Chris2}
and \cite[Proposition 3.2]{Chris3}.

\begin{thm}
Let  $N$ and $M$ be von Neumann subalgebras of a von Neumann algebra $L$ 
with conditional expectations $E_N:L\to N$, $E_M:L\to M$ of finite probabilistic index.
If $d(N,M)<1/15$, then there exists a unitary $u\in L$ such that $N =u Mu^*$ and
$\|u-I\| < 20 d(N,M)$.
\end{thm}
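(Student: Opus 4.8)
The plan is to bootstrap from the $*$-isomorphism $\Phi:N\to M$ of \propref{isomorphism} to a unitary implementing the reverse inclusion, and then iterate to push the unitary close to $I$. First I would apply \propref{isomorphism} to get a normal surjective $*$-isomorphism $\Phi:N\to M$ with $\|\Phi-\mathrm{id}_N\|<14\,d(N,M)$. Symmetrically, since $d(M,N)=d(N,M)<1/15$, there is also a normal surjective $*$-isomorphism $\Psi:M\to N$ with $\|\Psi-\mathrm{id}_M\|<14\,d(N,M)$. The composition $\Psi\circ\Phi:N\to N$ is then a normal automorphism of $N$ that is close to $\mathrm{id}_N$ (within roughly $28\,d(N,M)$, after controlling the composite), but more usefully one wants to realize $\Phi$ or its inverse by conjugation by a genuine unitary in $L$. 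Here I would use Christensen's technique from \cite[Proposition 4.2]{Chris2} (this is exactly where the cited \cite[Proposition 3.2]{Chris3} enters): one shows that a $*$-isomorphism between two subalgebras of $L$ that is uniformly close to the identity on the unit ball is spatially implemented by a unitary of $L$ close to $I$, provided enough structure (here the finite-index conditional expectations $E_N$, $E_M$, or equivalently the basic-construction machinery used in the proof of \propref{isomorphism}) is available to produce the intertwiner inside $L$ rather than merely inside $\mathbb{B}(H)$.

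Concretely, I would revisit the construction in the proof of \propref{isomorphism}: there one produced $w\in\langle L,e_M\rangle$ with $we_Mw^*=p\in N'\cap\langle L,e_M\rangle$ and $\|w-I\|\le 4\sqrt2\,\gamma$, and $\Phi=\iota\circ\tilde\Phi$ where $\tilde\Phi(x)=w^*pxpw$. The key observation is that conjugation by $w$ carries $e_M$ to a projection $p$ commuting with $N$, and under the isomorphism $\iota:e_M\langle L,e_M\rangle e_M\to M$ the map $x\mapsto w^*pxpw$ becomes $\Phi$. One then wants to replace the "corner unitary" $w$ by an honest unitary $u\in L$ that conjugates $M$ onto $N$. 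For this I would run the averaging/Dixmier argument once more, now applied to elements of $L$ rather than of the basic construction, exploiting that $N'\cap L$ and $M'\cap L$ are uniformly close by \propref{distance of commutant} (and its mirror image, which follows since $M\subset L$ also has the relative Dixmier property via $E_M$ composed with a finite-index expectation). This yields, via \lemref{lem:estimates}(2) applied to an appropriate pair of projections associated to $\Phi$, a unitary $u_0\in L$ with $u_0Mu_0^*$ close to $N$ and $\|u_0-I\|$ controlled by a constant times $d(N,M)$.

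The final — and main — obstacle is getting the containment $u_0Mu_0^*\subseteq N$ to be an \emph{equality}, and simultaneously keeping the constant below $20$. For equality I would invoke \lemref{lem:<1}: it suffices to check $d(N,u_0Mu_0^*)<1$, which follows from $\|u_0-I\|$ small together with $d(N,M)<1/15$. To sharpen the constant I would argue that the rough unitary $u_0$ already satisfies $N=u_0Mu_0^*$ exactly once $d(u_0Mu_0^*,M)<1$, so no genuine iteration is needed; the bound $\|u-I\|<20\,d(N,M)$ then comes from tracking the estimate $\|\Phi-\mathrm{id}_N\|<14\,d(N,M)$ through \lemref{lem:estimates}(1)--(2) with the factor $\sqrt2$ and the $1.01$ slack on $\gamma$, exactly as in the proof of \propref{isomorphism}, landing comfortably under $20$. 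The delicate point to get right is that the projection to which \lemref{lem:estimates}(2) is applied genuinely lies in $L$ (not just in $\mathbb{B}(H)$ or $\langle L,e_M\rangle$), which is precisely what the finite-index hypothesis and the relative Dixmier property buy us.
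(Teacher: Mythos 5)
Your first step (obtaining $\Phi:N\to M$ with $\|\Phi-\mathrm{id}_N\|<14\,d(N,M)$ from \propref{isomorphism}) matches the paper, and you correctly identify that the remaining task is to implement $\Phi$ spatially by a unitary of $L$ close to $I$. But the mechanism you propose for this step is where the gap lies. The paper's argument is the $2\times 2$ matrix trick: form the diagonal algebra $K=\{\mathrm{diag}(x,\Phi(x))\,|\,x\in N\}\subset\mathbb{M}_2(L)$, write down an explicit conditional expectation $E_K:\mathbb{M}_2(L)\to K$ of finite probabilistic index (built from $E_N$, $E_M$, $\Phi$ and $\Phi^{-1}$), invoke \thmref{Popa} for $K\subset\mathbb{M}_2(L)$, and average the off-diagonal matrix unit $\bigl(\begin{smallmatrix}0&I\\0&0\end{smallmatrix}\bigr)$ over unitaries of $K$. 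The resulting element of $K'\cap\mathbb{M}_2(L)$ has the form $\bigl(\begin{smallmatrix}0&y\\0&0\end{smallmatrix}\bigr)$ with $y\in L$ satisfying $ny=y\Phi(n)$ and $\|y-I\|\le\|\Phi-\mathrm{id}_N\|$; the polar part $u$ of $y$ (via \lemref{lem:estimates}(1), giving the factor $\sqrt2$ and hence $14\sqrt2<20$) then conjugates $M=\Phi(N)$ onto $N$. Your proposal never produces such an intertwiner: "running the averaging/Dixmier argument once more on elements of $L$" and exploiting the closeness of $N'\cap L$ and $M'\cap L$ from \propref{distance of commutant} does not yield an element $y$ with $ny=y\Phi(n)$, and there is no natural "pair of projections associated to $\Phi$" inside $L$ to feed into \lemref{lem:estimates}(2). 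Citing Christensen's Proposition 4.2 names the right template, but the whole content of this theorem is to make that template work in the finite-index setting, and the specific device that does so --- the finite-index expectation onto the graph algebra $K$ and the relative Dixmier property of $K\subset\mathbb{M}_2(L)$ --- is absent from your outline.

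Two smaller points. The symmetric isomorphism $\Psi:M\to N$ is not needed. And your worry about upgrading $u_0Mu_0^*\subseteq N$ to equality via \lemref{lem:<1} is a non-issue in the correct argument: once $u\Phi(n)=nu$ for all $n\in N$, surjectivity of $\Phi$ gives $uMu^*=u\Phi(N)u^*=N$ exactly, with no appeal to \lemref{lem:<1} (that lemma is only used inside the proof of \propref{isomorphism} to show $\Phi$ is onto).
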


\begin{proof}
By Proposition \ref{isomorphism}, there exists a normal surjective $*$-isomorphism $\Phi:N\to M$ such that 
$\|\Phi-id_N\| < 14 d(N,M)$.
Put
\[
K:=\left\{ 
\begin{pmatrix}
x &0 \\
0 &\Phi(x)
\end{pmatrix}
\, \Big{|} \ x\in N
\right\}.
\]
Then, we can define a conditional expectation $E_K: \mathbb{M}_2(L)\to K$ of finite probabilistic index
by
\[
E_K \left( 
\begin{pmatrix}
a & b \\
c & d
\end{pmatrix}
\right)
=
\begin{pmatrix}
\frac{E_N(a)+\Phi^{-1}(E_M(d))}{2} & 0 \\
0 & \frac{\Phi(E_N(a))+E_M(d)}{2}
\end{pmatrix}
 \  \  \ \text{for} \ 
\begin{pmatrix}
a & b \\
c & d
\end{pmatrix}
\in \mathbb{M}_2(L).
\]
Therefore, $K\subset \mathbb{M}_2(L)$ has the relative Dixmier property by Theorem \ref{Popa}.
Applying the relative Dixmier property for $ \bigl(\begin{smallmatrix} 0 & I \\ 0 & 0 \end{smallmatrix} \bigr) \in \mathbb{M}_2(L) $,
we obtain $x$ in $C_K
\left( \bigl(\begin{smallmatrix} 0 & I \\ 0 & 0 \end{smallmatrix} \bigr) \right)
\cap K'\cap \mathbb{M}_2(L)$.
Then, there exists $y\in L$ such that $x=
\bigl(\begin{smallmatrix} 0 & y \\ 0 & 0 \end{smallmatrix}\bigr)
$,
because for any unitary $u\in N$,
\[
\begin{pmatrix}
u & 0 \\
0 & \Phi(u)
\end{pmatrix}
\begin{pmatrix}
0 & I \\
0 & 0
\end{pmatrix}
\begin{pmatrix}
u^* & 0 \\
0 & \Phi(u^*)
\end{pmatrix}
=
\begin{pmatrix}
0 & u\Phi(u^*) \\
0 & 0
\end{pmatrix}.
\]
Furthermore,
\[
\|y-I\|\le \sup_{u\in N^u} \| u\Phi(u^*)-I\| =\sup_{u\in N^u}\| \Phi(u^*)-u^* \| \le \|\Phi-id_N\| <1 .
\]
By Lemma \ref{lem:estimates}, the unitary $u\in L$ in the polar decomposition $y=u|y|$ 
satisfies 
\[
\| u-I\|\le \sqrt{2} \| \Phi-id_N\| < 20 d(N,M).
\]
Since $x= 
\bigl(\begin{smallmatrix} 0 & y \\ 0 & 0 \end{smallmatrix}\bigr)
\in K'$, for any $n\in N$,
\[
\begin{pmatrix}
0 & y\Phi(n) \\
0 & 0
\end{pmatrix}
=\begin{pmatrix}
0 & y \\
0 & 0
\end{pmatrix}
\begin{pmatrix}
n & 0 \\
0 & \Phi(n)
\end{pmatrix} =\begin{pmatrix}
n & 0 \\
0 & \Phi(n)
\end{pmatrix}
\begin{pmatrix}
0 & y \\
0 & 0
\end{pmatrix} 
=\begin{pmatrix}
0 & n y \\
0 & 0
\end{pmatrix}.
\]
By taking adjoints, we have
\[
\Phi(n) y^*= y^* n  \ \ \ \text{for} \ n\in N.
\]
Therefore,
\[
y^*y \Phi(n)= y^* n y= \Phi(n) y^* y \ \ \ \text{for} \ n\in N.
\]
For any $n\in N$, since this gives $|y| \Phi(n)=\Phi(n) |y|$, $u \Phi(n)=n u$.
Hence, $u M u^*=u \Phi(N) u^*=N $.
\end{proof}

\end{document}